\newtheorem{thm}{Theorem}
\newtheorem{prop}{Proposition}
\newtheorem{lem}{Lemma}
\newtheorem{cor}{Corollary}
\newtheorem{defn}{Definition}
\newtheorem{rem}{Remark}
\newtheorem{exmp}{Example}
\newcommand{\sparen}[1]{\left(#1\right)}
\numberwithin{equation}{section}
\begin{document}
\title{Unique Determination of Sound Speeds for Coupled Systems of Semi-linear Wave Equations\\\vskip 0.8cm}
\author{Alden Waters \thanks{JBI Institute, Rijksuniversiteit Groningen, Groningen, Netherlands} \hspace{0.05cm}}

\date{}

\maketitle \vskip 0.5cm
\begin{abstract}
We consider coupled systems of semi-linear wave equations with different sound speeds on a finite time interval $[0,T]$ and a bounded Lipschitz domain $\Omega$ in $\mathbb{R}^3$ with boundary $\partial\Omega$. We show the coupled systems are well posed for variable coefficient sounds speeds and short times. Under the assumption of small initial data, we prove the source to solution map associated with the nonlinear problem is sufficient to determine the source to solution map for the linear problem. We can then reconstruct the sound speeds in $\Omega$ for the coupled nonlinear wave equations under certain geometric assumptions. In the case of the full source to solution map in $\Omega\times[0,T]$ this reconstruction could also be accomplished under fewer geometric assumptions. 
\end{abstract}

\textbf{Keywords:} Inverse problems, coupled systems, non-linear hyperbolic equations. \\


\section{Introduction:}
We consider coupled systems of semi-linear wave equations with variable sound speeds on a bounded Lipschitz domain $\Omega$ in $\mathbb{R}^3$ with boundary $\partial\Omega$. In nonlinear problems, when waves are propagated, they interact and the interaction may cause difficulties in building an accurate parametrix and detecting the variable coefficients.

For the problem of elasticity, the stress the material is under going is described by the Lam\'{e} parameters, $\lambda$ and $\mu$. Recently in \cite{PUV} it was shown that this important linear hyperbolic problem where the solutions are vector valued can be reduced to three variable speed wave equations with scalar valued solutions. The authors of \cite{PUV} are then able to solve the associated inverse boundary value problem for the linear elasticity equation by building solutions to the wave equations. Ultimately we hope to consider the fully nonlinear elastic wave equations, which they do not consider in \cite{PUV}, but we will report on this in future work. However, even in the simpler model here, for the case of variable sound speeds well posedness estimates appear to be novel. Parametric construction of solutions to these coupled systems has been done only for the constant coefficient case c.f., \cite{katayama,katayama2, sideris3, sideris2}. In the case of nonlinear elasticity, constant coefficient equations have been examined in \cite{sideris,met1,met2,met3} although many of these references are interested in a different (and challenging!) perspective which is the issue of well-posedness and scattering for long times.   

The problem of parameter recovery is well studied for a class of linear hyperbolic problems such as the wave equation $(\partial_t^2-\Delta_g)u=0$, for generic Riemannian manifolds $(M_0,g)$ c.f. \cite{DSK,SU,SK,eskin2,BD,CS} for example. One can even recover the metric $g$ for the associated semi-linear problem. The latter problem is handled via a linearisation method, \cite{LUK2}. The authors also apply their linearisation techniques to the case of Einstein's equations in the related article \cite{LUK}. The difference in these articles and the material presented here is that the coefficients e.g the metric $g$ are time dependent, and ours are not. Time dependence of the metric $g$ adds considerable difficulties. However we are able to handle the case of multiple sound speeds and coupled systems of nonlinear wave equations. Due to the technical difficulties of the problem, such coupled nonlinear wave equations have not been considered before.

Even in the linear case, the pioneering work on parameter recovery in nonlinear inverse problems in \cite{LUK2,LUK} uses the singularities of their nonlinear hyperbolic problems to determine the metric in their partial differential equations (PDE). They use the calculus of cononormal singularities developed in \cite{MR} and \cite{MU} to recover the metric at every point. With $(M_0,g)$ a Lorentzian manifold, each of these articles \cite{LUK2,LUK} makes use of the Lagrangian distributions in \cite{MU} which are associated to the solutions 
\begin{align}
(\partial_t^2-\Delta_g)u=\delta_{x=x_0}
\end{align}
and then builds general solutions to the wave equation with source terms $f(t,x)$ 
\begin{align}
(\partial_t^2-\Delta_g)u=f(t,x)
\end{align}
by using a Green's function argument. We show that given sufficient regularity in the source data $f(t,x)$ that this approach is unnecessary. Indeed, in \cite{LUK2, LUK}, they assume that the regularity of the source is $f(t,x)\in H^6(V)$, in the open set $V$ in space-time where they are measuring. The regularity they require is actually higher than the regularity needed here, but for their main argument it seems that a distributional solution would suffice. Hence one of the open challenges is to determine how little regularity is needed for metric recovery in the various cases presented here and in \cite{LUK2,LUK}. 

In, \cite{LUK,LUK2} it is necessary to check the interaction of the singularities under the nonlinearity as we know by \cite{rr} that waves can interact when a nonlinearity is present and produce more singularities. Moreover in \cite{rr}, \cite{rr2}, they showed that these crossings are the only place where new singularities can form.  

The second reason one must check the interaction of the nonlinear waves is that certain nonlinearities known as null forms, can act on propagating waves by smoothing them in such a way that approximate solutions to the nonlinear PDE and the linear PDE are indistinguishable in the micro-local sense, c.f., \cite{klainerman1, klainerman2}. In their articles \cite{LUK, LUK2}, the authors exploit the singularity crossings to reconstruct the geometry of domains they consider. We do not, so while we still have to check the interaction of the crossings, we can proceed differently than in  \cite{LUK,LUK2}. As in \cite{LUK,LUK2} they have chosen sufficiently regular data for the PDE, we render this approach is unnecessary, in the time independent coefficient case. 

We have to be careful about the type of measurements that we are taking. In particular, it is not known if the coupled nonlinear equations are well posed for generic compact manifolds with boundary. In fact for quadratic nonlinearities, it is likely that they are not, as the simpler case of the scalar semi-linear wave equation in not globally well posed. We could extend our short time well-posedness estimates to generic globally hyperbolic manifolds, but we leave this for future work. 

As such, the major contributions of this article are the following:
\begin{itemize}
\item A reduction of source-to-solution map data (to co-dimension 1) required to determine the topological structure of the manifold. 
\item Simplification of the singularity analysis and parametrix construction for semi-linear wave equations. 
\item Provision of a toy model and well-posedness estimates for the non-linear elasticity equations. 
\end{itemize}

In order to avoid difficulties with boundary considerations we examine the solutions on the boundary of $[0,T]\times \Omega$, where $T$ is finite. This scenario is not a traditional boundary value problem. The hyper surface $\partial\Omega$ is not a true boundary for the waves, simply where we are measuring. 

Under these same geometric assumptions as in \cite{PUV}, for the nonlinear case, and a small displacement field, we are able to reduce the amount of data required to uniquely determine the vector field to just boundary valued data on the artificial surface $[0,T]\times\partial\Omega$. This result is completely new for nonlinear hyperbolic PDE, even in the case when the solutions are scalar valued. The techniques required for the reduction of data, are new from those in \cite{LUK, LUK2}. 

\noindent \emph{Notation:} 
In this paper we use the Einstein summation convention. For two matrices $A$ and $B$, the inner product is denoted by
 \begin{align*}
A:B=a_{ij}b_{ji},
\end{align*}
and we write $|A|^2 = A:A$. Again, $\overline{\Omega}\subset \mathbb{R}^3$ is a compact subset of $\mathbb{R}^3$. 
For vector--valued functions
\begin{align*}
f(x)=(f_1(x),f_2(x),f_3(x)):\overline{\Omega}\rightarrow \mathbb{R}^3\ ,
\end{align*}
the Hilbert space $H^{m}(\overline{\Omega})^3$, $m\in \mathbb{N}$ is defined as the completion of the space $\mathcal{C}_c^{\infty}(\overline{\Omega})^3$ with respect to the norm
\begin{align*}
\|f\|^2_m = \|f\|^2_{m,\overline{\Omega}} = \sum\limits_{|i|=1}^m \int\limits_{\overline{\Omega}}\sparen{|\nabla^i f(x)|^2+|f(x)|^2}\,dx,
\end{align*}
where we write $\nabla^i= \partial^{i_1} \partial^{i_2} \partial^{i_3}$ for $i=(i_1,i_2, i_3)$ for the higher-order derivative. 

In general, we assume the sound speed coefficients are $C^s(\overline{\Omega})$ with $s$ an integer such that $s-1>3/2$ in order to use Sobolev embedding. We consider the $3-d$ case here, but many of the results generalise to other dimensions and different types of power semi-linearities provided the underlying equations are well-posed. Let $m_1$ and $m_0$ be nonzero constants with $m_1\geq m_0$. We define the admissible class of conformal factors depending on $s$ as
 \begin{align}\label{below}
\mathcal{A}_0^s=\{ c^2(x); \quad m_1\geq c^2(x) \geq m_0;\,\, \forall x\in \overline{\Omega} \quad \mathrm{and}\quad c^2\in C^s(\overline{\Omega})\}
\end{align}
We consider a coupled system with three sound speeds $c_i^2$. We assume $c_i^2\in \mathcal{A}^s_0$ for all $i=1,2,3$. Moreover we also assume there exists a ball $\overline{\Omega}\subset B_R(0)$ such that $c_i\equiv 1$ on $(B_R(0))^c$, and that $c_i$ is extended in a smooth way outside $\overline{\Omega}$ so this is possible. We let $\Omega'$ be an extended domain containing $\overline{\Omega}$.  

\section{Statement of the Main Theorem}

We now examine a coupled system of semi-linear wave equations, which is a toy model from the linearisation of the nonlinear elasticity problem. We could extend these results with appropriate modifications to arbitrary quadratic nonlinearities. Recall we have the following inclusions $\Omega\subset \Omega'\subset\mathbb{R}^3$. Let $u=(u_1,u_2,u_3)$ and we consider the system:
\begin{align}\label{m2N}
&\partial_t^2u_i-c_i^2(x)\Delta u_i=|u|^2+f(t,x)  \quad \textrm{in}\,\, (0,T)\times \Omega', \quad i=1,2,3 \\& \nonumber
u(0,x)=b_0(x) \quad \partial_tu(0,x)=b_1(x) \quad \textrm{in}\quad \Omega' \\&
u(t,x)|_{\partial\Omega'\times (0,T)}=0 \nonumber
\end{align} 
Assume $c_i^2\in \mathcal{A}^s_0$. This equation is well posed with $u(t,x)\in C([0,T];H^s(\Omega')^3)\cap C^1([0,T];H^{s-1}(\Omega')^3))$ for $s-1>3/2$, when $||f(t,x)||_{H^1([0,T]; H^{s-1}(\Omega')^3)}$ $||u_0(x)||_{H^s((\Omega')^3)}$, $||u_1(x)||_{H^{s-1}((\Omega')^3)}$ are all bounded. The constant $T$ is finite depending on a uniform bound on the $H^1([0,T]; H^{s-1}(\Omega')^3)$ norm of $f(t,x)$, $||b_0(x)||_{H^s((\Omega')^3)}$, $||b_1(x)||_{H^{s-1}((\Omega')^3)}$, the $H^s(\Omega)^3$ norm of $c_i(x)$, $i=1,2,3$, and $m_0, m_1$. This local well posedness result does not appear to have been stated in the literature in this form and proved in the Appendix, where the dependence of the various parameters is detailed. 

We recall that as a consequence of Sobolev embedding for all $\alpha>3/2$, we have $H^{\alpha}(\Omega')\subseteq L^{\infty}(\Omega')$. We notice that because $s>5/2$, by Sobolev embedding, we automatically obtain $u(t,x)\in C([0,T];C^1(\Omega')^3)\cap C^1([0,T];C(\Omega')^3)$. For simplicity we assume $s=3$, for the rest of this article except the Appendix and while the regularity in the proof techniques for recovery of the coefficients could be reduced, it is unclear if the system data propagates regularly in any sense for $s\leq 5/2$. 

We let the vector valued source-to-solution map $\Lambda$ associated to $u$ solving \eqref{m2N} be a map which is defined by 
\begin{align*}
(\Lambda (b_0,b_1,f))=(u_1,u_2,u_3)|_{[0,T]\times\partial\Omega}
\end{align*}
The map $\Lambda$ is defined as an operator provided the input is in the regularity class in the main theorem- the trace theorem (see the Appendix, Lemma \ref{dirichlet}) gives immediately that the map is well defined with range in $L^2([0,T];L^2(\partial\Omega))$. 

Analogously we let the linear source-to-solution map $\Lambda^{lin}$ associated to $u_{lin}$ solving \eqref{m2N} with $0$ right hand side be the map of the source to trace of the solution. It is a key point that we restrict the domain of $\Lambda$ to  a subclass of data $F$ of the form $F=(b_0,b_1,f)=\epsilon F_1=\epsilon(b_0',b_1',f_1)$, with $F_1$ \emph{independent} of $\epsilon$ and such that 
\begin{align}
||b_0'||_{H^3(\Omega')^3}+||b_1'||_{H^2(\Omega')^3}+||f_1||_{L^2([0,T];H^2(\Omega')^3)}=||F_1||_*\leq 1
\end{align} and not all possible data. (The number 1 is arbitrary, it could be a different finite constant) As a consequence of the proof techniques, the domain of the operator $\Lambda^{lin}$ we determine takes a subclass of data $F$ of the form $F=F_1$ with $||F||_*\leq 1$, for a particular finite maximum $T$ as detailed below. The $T$ in consideration is then \emph{independent} of $\epsilon$. 

Let $g_0$ denote the Euclidean metric and we assume the parameter $\epsilon$ is such that $\epsilon\in (0,\epsilon_1)$, for some finite $\epsilon_1<1$. Let $T_0(\epsilon)$ be the maximal time for which the system \eqref{m2N} is well posed, which is inversely proportional to $\epsilon$. We assume $T$ fixed is such that $ T<T_0(\epsilon_1)$. (Again, the timescale $T_0$ and its dependence on $\epsilon$ is detailed in the Appendix).

Our main result is the following
\begin{thm}\label{main}
Let $\mathcal{U}_1(t,x)=(u_{11},u_{12},u_{13})$ and $\mathcal{U}_2(t,x)=(u_{21},u_{22},u_{23})$, satisfy \eqref{m2N} with distinct sound speed coefficients, $c_{i,1}$ and $c_{i,2}\in \mathcal{A}_0^3$, for $i=1,2,3$. If $\Lambda_1=\Lambda_2$ on $[0,T]\times \partial\Omega$, then $\Lambda_1^{lin}=\Lambda_2^{lin}$ on $[0,T]\times\partial\Omega$.
\end{thm}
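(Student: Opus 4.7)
The plan is to exploit the small parameter $\epsilon$ built into the definition of $\Lambda$ and extract the linear source-to-solution map as the first derivative in $\epsilon$ of the nonlinear one, evaluated at $\epsilon=0$. Fix any admissible $F_1=(b_0',b_1',f_1)$ with $\|F_1\|_*\leq 1$. For each $\epsilon\in(0,\epsilon_1)$, let $u^{(\epsilon)}$ be the solution of \eqref{m2N} with data $\epsilon F_1$ on $[0,T]\times\Omega'$, and let $v$ be the solution of the \emph{linear} system
\begin{align*}
\partial_t^2 v_i - c_i^2(x)\Delta v_i = f_{1,i},\qquad v(0,x)=b_0'(x),\ \partial_t v(0,x)=b_1'(x),\qquad v|_{\partial\Omega'\times(0,T)}=0,
\end{align*}
for $i=1,2,3$. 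Since $T<T_0(\epsilon_1)$, the well-posedness result quoted before the theorem gives both $u^{(\epsilon)}$ and $\epsilon v$ in $C([0,T];H^3(\Omega')^3)\cap C^1([0,T];H^2(\Omega')^3)$ uniformly in $\epsilon$.

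The second step is to introduce the remainder $w^{(\epsilon)}=u^{(\epsilon)}-\epsilon v$ and derive its equation. A direct substitution shows that $w^{(\epsilon)}$ solves the coupled system
\begin{align*}
\partial_t^2 w_i^{(\epsilon)} - c_i^2(x)\Delta w_i^{(\epsilon)} = |u^{(\epsilon)}|^2,\qquad w^{(\epsilon)}(0,x)=0,\ \partial_t w^{(\epsilon)}(0,x)=0,\qquad w^{(\epsilon)}|_{\partial\Omega'\times(0,T)}=0.
\end{align*}
Since $\|u^{(\epsilon)}\|_{C([0,T];H^3)}\lesssim\epsilon$ uniformly in $\epsilon$ by well-posedness, the Moser-type estimate on $H^3(\Omega')^3$ (which is an algebra by Sobolev embedding) gives $\||u^{(\epsilon)}|^2\|_{H^1([0,T];H^2(\Omega')^3)}\lesssim\epsilon^2$. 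Applying the linear energy estimate underlying the well-posedness statement, together with the trace lemma (Lemma \ref{dirichlet} in the Appendix), yields
\begin{align*}
\bigl\|w^{(\epsilon)}\bigr|_{[0,T]\times\partial\Omega}\bigr\|_{L^2([0,T];L^2(\partial\Omega)^3)}\leq C\epsilon^2,
\end{align*}
with $C$ depending only on $T$, $m_0$, $m_1$, and the $C^3$-norms of the sound speeds, but not on $\epsilon$.

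For the third step, apply this construction separately to the sound speeds $c_{i,1}$ and $c_{i,2}$, obtaining nonlinear solutions $\mathcal{U}_k^{(\epsilon)}$, linear solutions $v_k$, and remainders $w_k^{(\epsilon)}=\mathcal{U}_k^{(\epsilon)}-\epsilon v_k$ for $k=1,2$. The hypothesis $\Lambda_1(\epsilon F_1)=\Lambda_2(\epsilon F_1)$ gives
\begin{align*}
\epsilon\bigl(v_1-v_2\bigr)\bigr|_{[0,T]\times\partial\Omega}=\bigl(w_2^{(\epsilon)}-w_1^{(\epsilon)}\bigr)\bigr|_{[0,T]\times\partial\Omega}.
\end{align*}
Dividing by $\epsilon$ and letting $\epsilon\downarrow 0$, the right-hand side vanishes in $L^2([0,T];L^2(\partial\Omega)^3)$ by the $O(\epsilon^2)$ bound of the previous step, while the left-hand side is independent of $\epsilon$. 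Thus $v_1|_{\partial\Omega}=v_2|_{\partial\Omega}$, which is exactly $\Lambda_1^{lin}(F_1)=\Lambda_2^{lin}(F_1)$. Since $F_1$ was an arbitrary element of the admissible class $\{F:\|F\|_*\leq 1\}$ on which $\Lambda^{lin}$ is defined, the conclusion follows.

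The main obstacle is ensuring that all of the above estimates are uniform in $\epsilon$ on a common time interval $[0,T]$; this is why the data class was rescaled by $\epsilon$ and why $T$ was chosen less than $T_0(\epsilon_1)$ rather than $T_0(\epsilon)$. Once that uniformity is in hand, the remainder of the argument is a clean first-order linearization, avoiding any singularity-calculus machinery of \cite{LUK,LUK2}.
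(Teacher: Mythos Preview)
Your proof is correct and follows essentially the same linearization-in-$\epsilon$ strategy as the paper: both use the energy estimates of Theorem~\ref{energyestimate} together with the trace Lemma~\ref{dirichlet} to show that the remainder after subtracting off the linear solution is $o(\epsilon)$ on $[0,T]\times\partial\Omega$, and then send $\epsilon\to 0$ to conclude $\Lambda_1^{lin}=\Lambda_2^{lin}$. The only difference is that the paper invokes the two-term parametrix of Lemma~\ref{parametrix} (with an $O(\epsilon^3)$ error) and phrases the final step as a contradiction, whereas you stop at first order with an $O(\epsilon^2)$ remainder and take a direct limit---which is all that is actually needed and makes your version slightly more streamlined.
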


As a result we have the following Corollary: 
\begin{cor}\label{mainC}
Assume that $\Lambda_1=\Lambda_2$ on $[0,T]\times\partial\Omega$, then $c^2_{i,1}=c^2_{i,2}$, for all $i=1,2,3$, whenever it is known that the source to solution map for the linear problem uniquely determines the conformal factors (up to a diffeomorphism).  
\end{cor}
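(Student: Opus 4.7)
The plan is to extract the linear map from the nonlinear one by first-order asymptotic expansion in the parameter $\epsilon$; the restriction of the domain of $\Lambda$ to data of the form $F = \epsilon F_1$ with $\|F_1\|_* \leq 1$ and $\epsilon \in (0,\epsilon_1)$ is exactly what makes this viable, since the well-posedness time $T < T_0(\epsilon_1)$ is independent of $\epsilon$.

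First I would rescale. Let $u^{(j)}_\epsilon$ denote the solution of \eqref{m2N} with sound speeds $c_{i,j}$ and data $\epsilon F_1$, and set $v^{(j)}_\epsilon := \epsilon^{-1} u^{(j)}_\epsilon$. Then $v^{(j)}_\epsilon$ satisfies
\begin{align*}
\partial_t^2 v_i - c_{i,j}^2 \Delta v_i = \epsilon\, |v|^2 + f_1, \qquad v(0,x) = b_0'(x), \quad \partial_t v(0,x) = b_1'(x),
\end{align*}
with vanishing trace on $\partial\Omega'\times(0,T)$. Let $v^{(j)}_0$ be the solution of the purely linear problem obtained by setting the $\epsilon|v|^2$ term to zero; by definition its boundary trace is $\Lambda_j^{lin}(F_1)$.

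The core quantitative step is to show $v^{(j)}_\epsilon \to v^{(j)}_0$ as $\epsilon \to 0^+$ in a norm strong enough to take the trace on $\partial\Omega$. Writing $w := v^{(j)}_\epsilon - v^{(j)}_0$, $w$ solves the linear wave equation with zero initial data and source $\epsilon|v^{(j)}_\epsilon|^2$. The uniform well-posedness estimate from the Appendix provides a bound
\begin{align*}
\|v^{(j)}_\epsilon\|_{C([0,T];H^3(\Omega')^3)\cap C^1([0,T];H^2(\Omega')^3)} \leq C,
\end{align*}
uniform in $\epsilon \in (0,\epsilon_1)$ and in $F_1$ with $\|F_1\|_*\leq 1$. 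Since $H^2(\Omega')$ is a Banach algebra (Sobolev embedding for $s-1 > 3/2$), $\epsilon|v^{(j)}_\epsilon|^2$ is bounded in $H^1([0,T];H^2(\Omega')^3)$ by $O(\epsilon)$. Applying the linear energy estimate, $\|w\|_{C([0,T];H^3)\cap C^1([0,T];H^2)} = O(\epsilon)$, and the trace lemma from the Appendix then gives $\|w|_{[0,T]\times\partial\Omega}\|_{L^2} = O(\epsilon)$.

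Finally I would combine the two sides. The hypothesis $\Lambda_1 = \Lambda_2$ applied to $\epsilon F_1$ reads $u^{(1)}_\epsilon|_{[0,T]\times\partial\Omega} = u^{(2)}_\epsilon|_{[0,T]\times\partial\Omega}$, equivalently $v^{(1)}_\epsilon|_{[0,T]\times\partial\Omega} = v^{(2)}_\epsilon|_{[0,T]\times\partial\Omega}$ in $L^2([0,T]\times\partial\Omega)$ for every $\epsilon \in (0,\epsilon_1)$. Letting $\epsilon\to 0^+$ and using the convergence from the previous step on each side yields $v^{(1)}_0|_{[0,T]\times\partial\Omega} = v^{(2)}_0|_{[0,T]\times\partial\Omega}$, which is exactly $\Lambda_1^{lin}(F_1) = \Lambda_2^{lin}(F_1)$ for every $F_1$ with $\|F_1\|_* \leq 1$. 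The main obstacle is the uniform-in-$\epsilon$ remainder estimate in Step 2, but this reduces to a straightforward application of the well-posedness machinery once one checks that the algebra property of $H^2(\Omega')$ provides the required $H^1_t H^2_x$ regularity of the quadratic source. Note that no microlocal interaction analysis is required, which is precisely the conceptual payoff of restricting to the small-data one-parameter regime.
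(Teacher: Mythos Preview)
Your proposal is correct and follows the same essential strategy as the paper: extract the linear source-to-solution map from the nonlinear one by a small-$\epsilon$ expansion, with the remainder controlled by the well-posedness estimates of the Appendix, and then invoke the hypothesis that the linear map determines the conformal factors. The paper carries this out as Theorem~\ref{main} via the explicit two-term parametrix of Lemma~\ref{parametrix}, writing $u=\epsilon w_1+\epsilon^2 w_2+E_\epsilon$ with $\|E_\epsilon\|=O(\epsilon^3)$ and then arguing by contradiction on the boundary norms after dividing through by $\epsilon^2$. Your rescaling $v=\epsilon^{-1}u$ and direct limit $\epsilon\to 0$ accomplish the same thing with only a first-order expansion; the quadratic corrector $w_2$ never appears. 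This is a modest but genuine streamlining: the paper's second term is not needed for the conclusion of Theorem~\ref{main} or the Corollary, though it does make the full hierarchy in powers of $\epsilon$ explicit, which could be useful if one wanted to recover further information from higher-order terms.
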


\begin{rem}
The proof of Theorem \ref{main} does not require any assumptions on $\Omega$, only that $\overline{\Omega}$ be compact for the well-posedeness estimates in Theorem \ref{energyestimate} to hold, and that $c_i\in \mathcal{A}_0^3$. However in practice some non trapping assumptions on the domain $\Omega$ are required for the hypothesis of the Corollary \ref{mainC} to hold c.f. \cite{LO,PUV,PUV2}. These non trapping assumptions may not be required if using the boundary control method and the full source to solution map \cite{B,BK}. Typically this Corollary enforces a condition of the form $\mathrm{diam}(\Omega)\leq T$ where the diameter of $\Omega$ is taken with respect to the maximum of the sound speeds. In the Appendix we show that such a condition is possible e.g., a nonzero $\epsilon_1$ is proven to exist in the Appendix in Lemma \ref{life}.
\end{rem}

The outline of this article is as follows. Section \ref{para} gives an explict parametrix relationship to the nonlinear wave equation. Section \ref{finalsec} shows that the parametrix in powers of $\epsilon$ is in fact a good hierarchy for recovering the parameters for the nonlinear PDE and solves the problem of finding the coefficients with various data sets. This section includes examples of non-trapping metrics which satisfy the Theorem \ref{main} and Corollary \ref{mainC}. There is an Appendix on well-posedness results for the coupled system. The well posedness estimates are one of the major contributions of this article, but they are in the Appendix as the Appendix could be stand alone.  

\section{Linearsation of the Inverse Problem}\label{para}

We consider the linear homogeneous system of wave equations
\begin{align}\label{linearW}
&\partial_t^2u_i-c_i^2(x)\Delta u_i=f(t,x), \quad i=1,2,3 \quad \textrm{in}\quad (0,T)\times \Omega'\\& \nonumber
u(0,x)=b_0(x) \quad \partial_tu(0,x)=b_1(x) \quad \textrm{in} \quad \Omega' \\&
u(t,x)|_{\partial\Omega'\times (0,T)}=0 \nonumber
\end{align} 
and the linear operator $\Box_S$ which is associated to the system if we let $u=(u_1,u_2,u_3)^t$. Through abuse of notation, we let $\Box_S^{-1}F(t,x)$ denote the solution to the Cauchy problem \eqref{linearW} above. As such, $\Box_S^{-1}$ is associated to the diagonal matrix
\begin{align}
\Box_S^{-1}=\left(\begin{array}{ccc} \Box^{-1}_{c_1}& 0& 0 \\ 0 & \Box^{-1}_{c_2} & 0\\ 0 & 0 & \Box^{-1}_{c_3} \end{array}\right)
\end{align}
with $\Box^{-1}_{c_i}$, $i=1,2,3$ is the inverse operator associated to each $\Box_{c_i}=\partial_t^2-c_i^2\Delta$. For \emph{any} fixed and finite $T$ and $\beta\in \mathbb{N}$, we know from Theorem \ref{energyestimate} in the Appendix that there exists a unique $u_i=\Box^{-1}_{c_i}(b_{0i},b_{1i},f_i)$ with $u_i\in C([0,T];H^{\beta}(\Omega'))\cap C^1([0,T];H^{\beta-1}(\Omega')) $, if $f_i\in H^1([0,T];H^{\beta-1}(\Omega))$ and $\epsilon$ is sufficiently small. As a result the operator $\Box_S^{-1}$ is diagonal in each component and is a bounded operator $H^1([0,T];H^{\beta-1}(\Omega)^3)\mapsto C([0,T];H^{\beta}(\Omega')^3)\cap C^1([0,T];H^{\beta-1}(\Omega')^3)$.  

We consider the 'open source problem' for the nonlinear waves now 
\begin{align}\label{nonlinearW}
&\partial_t^2u_i-c_i^2(x)\Delta u_i=|u|^2+f(t,x),  \quad i=1,2,3\quad \textrm{in} \quad\mathbb{R}_t^+\times \Omega'\\& \nonumber
u(0,x)=b_0(x) \quad \partial_tu(0,x)=b_1(x) \quad \textrm{in} \quad \Omega' \nonumber\\&
u(t,x)|_{\partial\Omega'\times (0,T)}=0 \nonumber
\end{align} 

Let $v=(v_1,v_2,v_3)$ and $w=(w_1,w_2,w_3)$ be three component vectors and we set $N(v,w)=(v\cdot w,v\cdot w,v\cdot w)$, although this construction is applicable for any quadratic nonlinearity. 
\begin{lem}\label{parametrix}
Let $\epsilon>0$, $f_1(t,x)\in H^1([0,T];H^2(\Omega')^3)$, $b_0',b_1'$ in $H^3(\Omega')^3$, $H^2(\Omega)'$ respectively,  with $||f_1(t,x)||_{H^1([0,T];H^2(\Omega')^3)}+||b_0'||_{H^3(\Omega')^3}+||b_1'||_{H^2(\Omega')^3}=||F_1||_*\leq 1$  a parametrix solution to \eqref{nonlinearW} when $F=\epsilon F_1=\epsilon(b_0',b_1',f_1)$ with $\epsilon$ small, is represented by the following
\begin{align}\label{expansion}
w=\epsilon w_1+\epsilon^2w_2+E_{\epsilon}
\end{align}
with individual terms given by 
\begin{align}\label{pexp}
& w_1=\Box_S^{-1}F \\& \nonumber
w_2=-\Box_S^{-1}(N((0,0,w_1),(0,0,w_1))\\& \nonumber
||E_{\epsilon}||_{C([0,T];H^1(\Omega')^3)\cap C^1([0,T];L^2(\Omega')^3)}\leq 2D_1(T)^3\epsilon^3 \nonumber
\end{align}
and $w\in C([0,T];H^3(\Omega')^3)\cap C^1([0,T];H^2(\Omega')^3)$. Moreover for $F=\epsilon F_1$ we have that 
\begin{align}\label{wsm}
||w_i||_{C([0,T];H^1(\Omega')^3)\cap C^1([0,T];L^2(\Omega')^3)}\leq (D_1(T))^i \quad \forall i=1,2
\end{align}
where $D_1(T)=C_1(1+T+(1+\tilde{A}_1T)\exp(\tilde{A}_1T))\exp(\tilde{A}_1T))$ is the constant in Theorem \ref{wpN} determined by \eqref{Wwp} from Theorem \ref{energyestimate}.
\end{lem}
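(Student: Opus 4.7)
The plan is a Picard-style expansion in $\epsilon$ anchored on the linear well-posedness theory for $\Box_S^{-1}$ established in Theorem \ref{energyestimate}. First I would invoke the nonlinear local well-posedness already asserted in Section 2 to secure the genuine solution $w \in C([0,T];H^3(\Omega')^3) \cap C^1([0,T];H^2(\Omega')^3)$ of \eqref{nonlinearW}. Next, define the two approximate terms explicitly by solving linear Cauchy problems: $w_1 := \Box_S^{-1}F_1$ with full Cauchy data taken from $F_1$, and $w_2 := -\Box_S^{-1}(N(w_1,w_1))$ with zero Cauchy data. Because $\|F_1\|_* \leq 1$, Theorem \ref{energyestimate} yields $\|w_1\|_{C H^1 \cap C^1 L^2} \leq D_1(T)$, which is the $i=1$ case of \eqref{wsm}. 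To handle $w_2$ one first checks that $N(w_1, w_1)$ is an admissible source for Theorem \ref{energyestimate}; this rests on the Sobolev embedding $H^2(\Omega') \hookrightarrow L^\infty(\Omega')$ in three dimensions and the fact that $H^2$ is a Banach algebra, so that products of $H^2$ functions lie in $H^2$ with norm controlled by the product of factor norms. Differentiating in $t$ and using $\partial_t w_1 \in C([0,T];H^2)$ controls the $H^1$-in-time norm of $N(w_1,w_1)$ by the same algebra property. A second application of Theorem \ref{energyestimate} then gives $\|w_2\|_{CH^1 \cap C^1 L^2} \leq D_1(T)\cdot\|N(w_1,w_1)\|_* \leq D_1(T)^2$, which is the $i=2$ case.

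The heart of the argument is the error bound. Setting $E_\epsilon := w - \epsilon w_1 - \epsilon^2 w_2$, a direct computation shows $E_\epsilon$ has zero Cauchy data and satisfies
\begin{align*}
\Box_S E_\epsilon \;=\; N(w,w) - \epsilon^2 N(w_1,w_1).
\end{align*}
Expanding $N(w,w)$ with $w = \epsilon w_1 + \epsilon^2 w_2 + E_\epsilon$, the $\epsilon^2 N(w_1,w_1)$ contribution cancels and the remaining terms decompose into a purely $O(\epsilon^3)$ part (comprising $2\epsilon^3 N(w_1,w_2)$, $\epsilon^4 N(w_2,w_2)$), plus terms linear and quadratic in $E_\epsilon$ with coefficients involving $\epsilon w_1$ and $\epsilon^2 w_2$. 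The $O(\epsilon^3)$ piece is bounded in the relevant $H^1([0,T]; H^{\beta-1})$-source norm by a constant multiple of $D_1(T)^3\epsilon^3$, using the bounds on $w_1, w_2$ from \eqref{wsm} together with the algebra/Sobolev estimates above. The $E_\epsilon$-dependent terms are exactly the kind that the Gr\"onwall loop inside Theorem \ref{energyestimate} is built to absorb — this is the role of the $\exp(\tilde{A}_1 T)$ factor already present in $D_1(T)$. Applying Theorem \ref{energyestimate} one last time (after a bootstrap taking $\epsilon$ small enough that the quadratic $N(E_\epsilon,E_\epsilon)$ term is dominated by the linear-in-$E_\epsilon$ term) yields $\|E_\epsilon\|_{CH^1 \cap C^1 L^2} \leq 2 D_1(T)^3 \epsilon^3$, the factor of $2$ tracking the two dominant $\epsilon^3$ cross-terms.

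The main obstacle I foresee is bookkeeping rather than anything conceptual: verifying that every quadratic product lies in the exact source space required by Theorem \ref{energyestimate}, and tracking constants carefully enough that the factor of $2$ (rather than some cruder multiple) falls out. Fortunately the bound on $E_\epsilon$ is stated in the $H^1$-in-space norm, which is strictly weaker than the $H^3$ regularity in which $w, w_1, w_2$ naturally live; this gives genuine margin in the multiplication and energy estimates and is the reason no Moser commutator estimates are needed. A mild subtlety is the bootstrap required to replace $\|E_\epsilon\|$ by $O(\epsilon^3)$ inside the quadratic term $N(E_\epsilon,E_\epsilon)$, but this is a standard continuity-in-$\epsilon$ argument and succeeds for $\epsilon\in (0,\epsilon_1)$ with $\epsilon_1$ chosen as in Lemma \ref{life}.
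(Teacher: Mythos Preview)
Your proposal is correct and follows the same Picard-expansion strategy as the paper. Two minor tactical differences are worth noting. First, for the bound on $w_2$ the paper does not invoke the $H^2$ Banach-algebra property; since the target norm is only $CH^1\cap C^1L^2$, the source need only lie in $L^2([0,T];L^2)$, and the paper controls $\|\,|w_1|^2\|_{L^2}$ via Gagliardo--Nirenberg--Sobolev, namely $\|w_1\|_{L^4}^2 \leq \|w_1\|_{\dot H^1}^{3/2}\|w_1\|_{L^2}^{1/2}$, using only the $H^1$ control of $w_1$ already in hand. Second, the bootstrap you flag for the quadratic term $N(E_\epsilon,E_\epsilon)$ is avoided in the paper by retaining the factored form $|u|^2 - |w_{\mathrm{approx}}|^2 = E_\epsilon\cdot(u+w_{\mathrm{approx}})$, with $u$ the true solution and $w_{\mathrm{approx}}=\epsilon w_1+\epsilon^2 w_2$; since both $u$ and $w_{\mathrm{approx}}$ are a priori $O(\epsilon)$ in $CH^1$ (from nonlinear well-posedness and \eqref{wsm} respectively), the right-hand side is genuinely linear in $E_\epsilon$ with an $O(\epsilon)$ coefficient, and the absorption condition $2T\epsilon D_1(T)<1$ (already implied by \eqref{cond1}) closes the estimate in one step without any continuity argument.
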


\begin{proof}
By plugging in \eqref{expansion} into \eqref{nonlinearW}, and matching up the terms in powers of $\epsilon$ one gets a set of recursive formulae. Solving the equations recursively gives the expansion for the coefficients. 
To prove inequality \eqref{wsm} one remarks that 
\begin{align}
||w_1||_{C([0,T];H^1(\Omega')^3)\cap C^1([0,T];L^2(\Omega')^3)}\leq D_1(||F_1||_*)
\end{align}
which is essentially inequality \eqref{Wwp} from Theorem \ref{energyestimate} in the Appendix. We use this fact and Gargliano-Nirenberg-Sobolev to see
\begin{align}
&||\Box_S^{-1}(N(w_1,w_1))||_{C([0,T];H^{1}(\Omega')^3)}\leq D_1||w_1^2||_{C([0,T];L^2(\Omega')^3)}\leq\\& 
 D_1||w_1||_{C([0,T];L^4(\Omega')^3)}^2\leq  D_1\left(||w_1||_{C([0,T];\dot{H}^1(\Omega')^3)}\right)^{3/2}\left(||w_1||_{C([0,T];L^2(\Omega')^3)}\right)^{1/2}\leq (D_1)^2
\end{align}
where in the last inequality we used the fact $x^{\alpha}$ is monotone increasing in $\alpha$ for  $\alpha\geq 0$ and the requirement $||F_1||_*\leq 1$, by our choice of domain for the operator $\Lambda$. 
 
To find a bound on the error, we see that if $u$ is the true solution to \eqref{m2N}, and $w$ is the Ansatz solution, $u-w=E_{\epsilon}(t,x)$ satisfies the equation 
\begin{align}
\Box_SE_{\epsilon}=|u|^2-|w|^2+\tilde{E}_{\epsilon}
\end{align}
where for all $i=1,2,3$
\begin{align}
\tilde{E}_{\epsilon i}=2\epsilon^3w_2\cdot w_1+\epsilon^4w_2^2
\end{align}
which implies
\begin{align}
\Box_SE_{\epsilon}=E(u+w)+\tilde{E}_{\epsilon}
\end{align}
Using \eqref{wsm}, and Theorem \ref{energyestimate}, the main part of the parametrix and error are bounded appropriately. Indeed, we have that 
\begin{align}\label{eps1}
&||E_{\epsilon}||_{C([0,T];H^1(\Omega')^3)\cap C^1([0,T];L^2(\Omega')^3)} \leq \\& \nonumber D_1(T)||E_{\epsilon}(u+w)||_{L^2([0,T];L^2(\Omega')^3)}+D_1(T)||\tilde{E}_{\epsilon}|| _{L^2([0,T];L^2(\Omega')^3)} \leq \\& \nonumber D_1(T)T||E_{\epsilon}||_{C([0,T];L^2(\Omega')^3)}||(u+w)||_{C([0,T];L^2(\Omega')^3)}+D_1(T)||\tilde{E}_{\epsilon}||_{L^2([0,T];L^2(\Omega')^3)} \leq \\& \nonumber
2T\epsilon D_1(T)||E_{\epsilon}||_{C([0,T];L^2(\Omega')^3)}+D_1(T)||\tilde{E}_{\epsilon}||_{L^2([0,T];L^2(\Omega')^3)}
\end{align}
The result follows provided 
\begin{align}
2T\epsilon D_1(T)<1
\end{align}
which is already satisfied by \eqref{cond1}. 
\end{proof}

\section{Testing of the Waves: A New Construction}\label{finalsec}

The difficulty in constructing accurate approximations to solutions of nonlinear PDE is existence of singularites which can propagate forward in time when the waves interact. When $\phi(x)$ is smooth and compactly supported, then convolution with
\begin{align}
f_k(x)=k^{d/2}\phi\left(\frac{x}{k}\right)
\end{align}
as $k\rightarrow \infty$ approximates a dirac mass $\delta_0$ with $d$ the dimension of the space in consideration. We see the function $f_k(x)$ is in $L^2(\mathbb{R}^d)$ but $f^2_k(x)$ is not when $k\rightarrow \infty$. This causes problems when considering a parametrix for a semi-linear wave equation of the form $\Box_gu=|u|^2$ and indeed, there are examples where the wave front sets of the nonlinear hyperbolic PDE do not coincide with those of the linear hyperbolic PDE, c.f. \cite{beals} Theorem 2.1 for example.  

In \cite{rr2}, they proved that the initial and subsequent crossings wave solutions to the linear PDE are the only source of nonlinear singularities. Thus, for $H^{\alpha}(\mathbb{R}^d)$ $\alpha>d/2$ compactly supported initial data we no longer have this problem, and the data propagates regularly (provided there are no derivatives in the nonlinearity). Using theorems in \cite{rr,rr2}, and \cite{beals} we could lower the assumptions on the initial data regularity for the problem, using the same techniques here, but this is not the main focus of the article. 

We show that one can recover the coefficients of the toy model for the elasticity coefficients and show that the wave interaction is nonzero given sufficient regularity. 

\begin{proof}[Proof of Theorem \ref{main}]

The components in the parametrix as in \eqref{expansion} for each of them we denote as $u_{jik}$ where $j$ denotes the vector component $j=1,2,3$, $i$ denotes the index of the system $i=1,2$ and $k$ denotes the power in the expansion of $\epsilon$ $k=1,2$. Therefore
\begin{align}
&(\mathcal{U}_1-\mathcal{U}_2)=\\& \nonumber \epsilon(u_{111}-u_{211}, u_{121}-u_{221}, u_{131}-u_{231})+ \epsilon^2(u_{112}-u_{212}, u_{122}-u_{222}, u_{132}-u_{232})+\\& \nonumber \epsilon^3(E_{\epsilon}^1-E_{\epsilon}^2)
\end{align}
where $E^0_{\epsilon}(t,x)=(E_{\epsilon}^1-E_{\epsilon}^2)$ is a three term component of the error. From Lemma \ref{parametrix}, this error is bounded by $D^3(T)\epsilon^3$ in $C([0,T];H^1(\Omega)^3)$ norm. Here is where we use the fact $u,w$ and $E_{\epsilon}$ are bounded in $C([0,T];C^1(\Omega)^3)$ norm so we know the data propagates regularly, and we do not have to check any singularity crossings at this point. 
 
If $\Lambda_1=\Lambda_2$ then it follows that $\Lambda_1^{lin}=\Lambda_2^{lin}$, by matching up the $\mathcal{O}(\epsilon)$ terms in the expansion and varying over all data $F_1$. Indeed, otherwise one has that $E_{\epsilon}, (w_{1,1}-w_{2,1})$, and $(w_{1,2}-w_{2,2})$ are all nonzero and
\begin{align}
\frac{||(w_{1,1}-w_{2,1})+\epsilon(w_{1,2}-w_{2,2})||_{L^2([0,T];L^2(\partial\Omega)^3)}}{\epsilon^2}=||E^0_{\epsilon}||_{L^2([0,T];L^2(\partial\Omega)^3)}
\end{align}
for all possible choices of data $F_1$ and for all $\epsilon$. The left hand side blows up as $\epsilon$ goes to $0$. However, the right hand side involving $E_{\epsilon}^0$ is uniformly bounded by $TD_1(T)^3<\epsilon_1^{-1}[D_1(\epsilon_1)]^{-2}\approx \epsilon_1^{-3}$ from \eqref{cond1} and Lemma \ref{dirichlet} in the Appendix. Thus this statement is impossible.  The key point is that for each $\epsilon$, the maximal lifespan of the solution is $T(\epsilon)$ with $T(\epsilon)>T(\epsilon_1)$. This is a bit tricky to understand as we restrict to $T$ such that $T<T(\epsilon_1)$, so even though a larger lifespan may exist, this is not what timescale we use for the \emph{family} of source data.  
\end{proof}

We now recall some definitions in the literature to provide an example of metrics which satisfy the necessary conditions for Theorem \ref{main}. 

\begin{defn}[Definition in \cite{UV}]
Let $(M_0,g)$ be a compact Riemannian manifold with boundary. We say that $M_0$ satisfies the foliation condition by strictly convex hyper surfaces if $M_0$ is equipped with a smooth function $\rho: M_0\rightarrow [0,\infty)$ which level sets $\sigma_t=\rho^{-1}(t),\,\, t<T$ with some $T>0$ finite, are strictly convex as viewed from $\rho^{-1}((0,t))$ for $g$ , $d\rho$ is non-zero on these level sets, and $\Sigma_0=\partial M_0$ and $M_0\setminus \bigcup_{t\in [0,T)}\Sigma_t$ has empty interior. 
\end{defn}

The global geometric condition of \cite{UV} is a natural analog of the condition
\begin{align}\label{decay}
\frac{\partial}{\partial r}\frac{r}{c(r)}>0
\end{align}
with 
\begin{align}
\frac{\partial}{\partial r}=\frac{x}{|x|}\cdot \partial_x 
\end{align}
the radial derivative as proposed by Herglotz \cite{Herglotz} and Wiechert $\&$ Zoeppritz \cite{WZ} for an isotropic radial sound speed $c(r)$. In this case the geodesic spheres are strictly convex. 

In fact \cite{PUV2}, cite Sec 6. extends the Herglotz and Wiechert $\&$ Zoeppritz results to not necessarily radial speeds $c(x)$ which satisfy the radial decay condition \eqref{decay}. Let $B(0,R)$ $R>0$ be the ball in $\mathbb{R}^d$ with $d\geq 3$ which is entered at the origin with radius $R>0$. Let $0<c(x)$ be a smooth function in $B(0,R)$. 
\begin{prop}
The Herglotz and Wieckert $\&$ Zoeppritz condition is equivalent to the condition that the Euclidean spheres $S_r=\{|x|=r\}$ are strictly convex in the metric $c^{-2}\,dx^2$ for $0<r\leq R$. 
\end{prop}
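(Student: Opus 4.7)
\medskip

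\noindent\textbf{Proof Proposal.} The plan is to translate the statement into a direct computation of the second fundamental form of a Euclidean sphere $S_r=\{|x|=r\}$ in the conformal metric $\tilde g=c^{-2}\,dx^2$, and then show that its sign is exactly controlled by the Herglotz--Wiechert--Zoeppritz quantity $\partial_r(r/c)$. First I would fix a point $x\in S_r$, write $\nu_e=x/|x|$ for the Euclidean outward unit normal, and observe that $\tilde\nu=c\,\nu_e$ is the outward unit normal of $S_r$ with respect to $\tilde g$. Using the standard conformal change formula for the Levi--Civita connection,
\begin{equation*}
\tilde\nabla_X Y=\nabla_X Y+X(\phi)Y+Y(\phi)X-g_e(X,Y)\,\nabla\phi,\qquad \phi=-\log c,
\end{equation*}
I would then compute $\tilde\nabla_X\tilde\nu$ for an arbitrary $X$ tangent to $S_r$ (i.e.\ $X\cdot x=0$).

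The Euclidean piece is a straightforward component calculation: writing $\tilde\nu^i=c(x)x^i/|x|$ and differentiating gives, on $S_r$,
\begin{equation*}
\nabla_X\tilde\nu=(X\cdot\nabla c)\,\nu_e+\frac{c}{r}\,X,
\end{equation*}
using $X\cdot x=0$ to kill the $x^ix^j$ term. Plugging into the conformal formula, the terms $X(\phi)\tilde\nu$ and $g_e(X,\tilde\nu)\nabla\phi$ vanish or cancel the $(X\cdot\nabla c)\nu_e$ contribution (the latter since $g_e(X,\tilde\nu)=0$ for $X$ tangent), and $\tilde\nu(\phi)=-\partial_r c$ where $\partial_r=(x/|x|)\cdot\nabla$ is the radial derivative. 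The net result is the clean identity
\begin{equation*}
\tilde\nabla_X\tilde\nu=\Bigl(\frac{c}{r}-\partial_r c\Bigr)X,
\end{equation*}
so the shape operator of $S_r$ in $\tilde g$ is a scalar multiple of the identity on the tangent space, with eigenvalue $\partial_r c-c/r$ relative to the outward normal.

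The final step is the sign match. The UV foliation definition declares $S_r$ strictly convex as seen from $\rho^{-1}((0,r))=\{|x|<r\}$, which in terms of the shape operator with the outward unit normal $\tilde\nu$ means it is negative definite; equivalently $\partial_r c-c/r<0$, i.e.\ $c-r\,\partial_r c>0$. A direct computation gives
\begin{equation*}
\partial_r\!\left(\frac{r}{c}\right)=\frac{c-r\,\partial_r c}{c^2},
\end{equation*}
so the two conditions are identical pointwise on $S_r$, and the equivalence holds for all $0<r\le R$. The one delicate point, which will be the main thing to get right, is the sign/convexity convention: I want to verify against the baseline Euclidean case $c\equiv 1$, where the eigenvalue becomes $-1/r<0$ and the spheres are indeed strictly convex from inside, which fixes the convention and confirms the argument. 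Everything else is routine bookkeeping of the conformal change formula, and no radiality of $c$ is used beyond the fact that $\partial_r$ denotes the radial directional derivative.
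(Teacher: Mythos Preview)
Your computation is correct. The paper does not actually supply a proof of this proposition: it is quoted as a known fact, attributed to \cite{PUV2}, Section~6, and immediately followed by an example. So there is nothing in the paper to compare against, and your argument stands on its own.

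Your approach---computing the Weingarten map of $S_r$ in the conformal metric via the standard conformal change formula for the Levi--Civita connection---is exactly the natural one, and the cancellation you identify (the tangential part of $\nabla c$ dropping out, leaving only the radial derivative $\partial_r c$) is the crux. One small remark: the phrase ``eigenvalue $\partial_r c - c/r$'' is slightly ambiguous, since $\tilde\nabla_X\tilde\nu=(c/r-\partial_r c)X$ literally gives the Weingarten map eigenvalue $c/r-\partial_r c$; what you are really tracking is the sign of the scalar second fundamental form $II(X,X)=-\tilde g(\tilde\nabla_X\tilde\nu,X)$. Your baseline check with $c\equiv 1$ resolves the convention unambiguously, so the argument is sound; just be explicit in the write-up about which object (Weingarten map vs.\ second fundamental form) carries the stated sign.
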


\begin{exmp} [Herglotz Wiechert and Zoeppritz Systems] 
Let $\Omega$ be the unit ball, so $M_0=\overline{\Omega}$ then for any $c_i\in C^3(\Omega)$, $i=1,2,3$ such that  
\begin{align}
\frac{1}{1+r^2}\leq c_i(r)\leq 1
\end{align}
satisfy the convexity condition \eqref{decay}, and the conditions of Theorem \ref{main} for equations of the form \eqref{m2N}. Using known results on injectivity in \cite{PUV2}, systems with coefficients of this type provide an example of a case where Corollary \ref{mainC} holds. Here we remark that $\partial_t^2-c^2\Delta$ and $\partial_t^2-\Delta_g$ have the same principal symbols if $g=c^{-2}dx$ and $c^2\in \mathcal{A}_0^3$ (they coincide in dimension 2). In particular, in \cite{PUV2} they show for the scalar valued wave equation with $f_1(t,x)=0$, 
\begin{align}\label{system}
&\partial_t^2u-c^2(x)\Delta_x u=0 \nonumber \quad \textrm{in}\,\, [0,T]\times \Omega', \\& \nonumber
u(0,x)=u_0(x) \quad \partial_tu(0,x)=u_1(x) \quad \textrm{in}\quad (\Omega')\\&
u(t,x)|_{\partial\Omega'\times[0,T]}=0
\end{align}
in $\mathbb{R}^3$, that the linear source to solution map $\Lambda$ is enough to determine the lens relation on the subset $\Omega$. For sound speeds of the above form, they can reconstruct the the sound speed from the lens relation. 
\end{exmp}

\section{Appendix: Well-posedness estimates for the semi-linear wave equations}

We set $\overline{\Omega}\subset \Omega'$, where $\Omega'$ is a larger domain in $\mathbb{R}^3$, with Dirichlet boundary conditions. In the appendix, we prove the following theorem:
\begin{thm}\label{wpN}
Let $s>5/2$ be an arbitrary integer. Assume that $c_i(x)\in \mathcal{A}_0^s, \forall i=1,2,3$. Let $F(t,x)=(u_0,u_1,f)=\epsilon F_1(t,x)=\epsilon(b_0,b_1,f_1)$ with $||b_0||_{H^s(\Omega')}+||b_1||_{H^{s-1}(\Omega')}+||f_1||_{L^2([0,T];H^{s-1}(\Omega'))}=||F_1(t,x)||_*\leq 1$, then there exists a unique solution $u(t,x)$ with $u(t,x)\in C([0,T]; H^s(\Omega')^3)\cap C^1([0,T]; H^{s-1}(\Omega')^3)$ to the coupled system:
\begin{align}\label{system}
&\partial_t^2u_i-c_i^2(x)\Delta_x u_i=|u|^2+f(t,x) \nonumber \quad \textrm{in}\,\, [0,T]\times \Omega', \quad i=1,2,3 \\& \nonumber
u(0,x)=u_0(x) \quad \partial_tu(0,x)=u_1(x) \quad \textrm{in}\quad (\Omega')^3\\&
u(t,x)|_{\partial\Omega'\times[0,T]}=0
\end{align}
provided $C(s)T<\log((3\epsilon)^{-1})-C'(s)$ where $C(s), C'(s)$ depend on $s$ and the $C^s(\Omega')$ norm of the $c_i's$.
\end{thm}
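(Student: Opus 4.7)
The plan is a Picard iteration anchored on the energy inequality for the scalar variable-coefficient wave operator $\Box_{c_i} = \partial_t^2 - c_i^2(x)\Delta$, with the lifespan condition on $T$ extracted from a bootstrap closure. First I would establish a linear estimate of the form
\begin{align*}
\|v\|_{C([0,T];H^s)} + \|\partial_t v\|_{C([0,T];H^{s-1})} \leq C_1 e^{\tilde{A}_1 T}\bigl(\|v_0\|_{H^s} + \|v_1\|_{H^{s-1}} + \|g\|_{L^2([0,T];H^{s-1})}\bigr)
\end{align*}
for $\Box_{c_i} v = g$ with Dirichlet data on $\partial \Omega'$, by the standard multiplier method: differentiate up to order $s-1$ in $x$, commute with $c_i^2\Delta$, control the commutator terms $[\nabla^\alpha, c_i^2\Delta]$ by Moser-type estimates together with the Sobolev embedding $H^{s-1}\hookrightarrow L^\infty$, and close via Gr\"onwall. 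The constants $C_1$ and $\tilde{A}_1$ depend on $s$ and on $\|c_i\|_{C^s(\Omega')}$. Diagonality of $\Box_S$ then gives the analogous estimate for the linear coupled system; this is Theorem \ref{energyestimate} with the dependence of $D_1(T)$ on $T$ and the coefficients made explicit.

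Next I would set up the iteration $u^{(0)}=0$ and $\Box_S u^{(n+1)} = |u^{(n)}|^2 + f$ with the prescribed initial and boundary data. Since $s-1 > 3/2$, the space $H^{s-1}(\Omega')$ is a Banach algebra, so
\begin{align*}
\bigl\||u^{(n)}|^2\bigr\|_{L^2([0,T];H^{s-1})} \leq \sqrt{T}\, C_s \|u^{(n)}\|_{C([0,T];H^{s})}^2.
\end{align*}
Writing $M_n$ for the norm of $u^{(n)}$ in $C([0,T];H^s)\cap C^1([0,T];H^{s-1})$, the linear estimate yields $M_{n+1} \leq C_1 e^{\tilde{A}_1 T}\bigl(\epsilon + \sqrt{T}\, C_s M_n^2\bigr)$. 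The bootstrap ansatz $M_n \leq 2\epsilon C_1 e^{\tilde{A}_1 T}$ propagates provided $4 C_1^2 C_s \sqrt{T}\, \epsilon\, e^{2\tilde{A}_1 T} \leq 1$; taking logarithms, absorbing the slowly-varying $\log\sqrt{T}$ into the additive constant (valid on the regime where $T$ is itself at most of order $\log(1/\epsilon)$), and rearranging gives a condition of the form $C(s)T \leq \log((3\epsilon)^{-1}) - C'(s)$, which is exactly the lifespan stated. The crucial structural point is that the $e^{\tilde{A}_1 T}$ from the linear estimate enters once directly and once squared through the quadratic ansatz, so that the overall $e^{2\tilde{A}_1 T}\epsilon$ balance forces a logarithmic --- rather than polynomial --- dependence of $T$ on $\epsilon^{-1}$.

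To produce the fixed point I would estimate the differences $d^{(n+1)} := u^{(n+1)} - u^{(n)}$, which satisfy $\Box_S d^{(n+1)} = (u^{(n)} + u^{(n-1)}) \cdot d^{(n)}$ with zero Cauchy and boundary data. Working in the slightly weaker space $C([0,T];H^{s-1})\cap C^1([0,T];H^{s-2})$, in which $H^{s-1}$ is still an algebra and the uniform bound on the $M_n$ is already in hand, the same timescale yields strict contraction with constant strictly less than $1$. Existence follows by completeness. Uniqueness comes from the same difference estimate applied to any two hypothetical solutions with matching data, and persistence of the higher regularity $u \in C([0,T];H^s)\cap C^1([0,T];H^{s-1})$ follows by weak-$\ast$ compactness of the iterates in the higher norm together with the strong convergence in the lower one.

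The principal obstacle I anticipate is not the structural organisation of the iteration but the careful bookkeeping of constants so that the final condition has the precise form $C(s)T < \log((3\epsilon)^{-1}) - C'(s)$ and the dependence on $\|c_i\|_{C^s}$ is tracked cleanly through the Moser commutator estimates. A secondary technical point is the boundary integration by parts on $\partial\Omega'$ in the energy identity; since the $c_i$ are extended to equal $1$ outside $B_R(0) \supset \overline{\Omega}$, one may take $\Omega'$ to be a sufficiently large ball so that the boundary terms are handled by the standard Dirichlet theory with no loss.
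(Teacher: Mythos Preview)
Your proposal is correct and follows essentially the same route as the paper: a linear energy estimate proved by the multiplier method plus Gr\"onwall (the paper's Theorem~\ref{energyestimate}), followed by a Picard/contraction argument in $C([0,T];H^s)\cap C^1([0,T];H^{s-1})$ using that $H^{s-1}$ is an algebra, with the closure condition $TD_s(T)<(3\epsilon)^{-1}$ extracted and then logarithmed to give the stated lifespan. The only cosmetic differences are that the paper recasts the problem as a first-order system and invokes Tao's abstract fixed-point lemma (Lemma~\ref{abstract}) rather than iterating by hand, and contracts directly in the strong norm rather than in $H^{s-1}$ with weak-$\ast$ compactness to recover the top regularity.
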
  

We prove the local well posedness theorem via an abstract Duhamel iteration argument. We recall Duhamel's principle.
\begin{defn}[Duhamel's principle]
Let $\mathcal{D}$ be a finite dimensional vector space, and let $I$ be a time interval. The point $t_0$ is a time $t$ in $I$. The operator $L$ and the functions $v,f$ are such that:
\begin{align}
L\in \mathrm{End}(\mathcal{D})
\quad v\in C^1(I\rightarrow \mathcal{D}), \quad f\in C^0(I\rightarrow \mathcal{D})
\end{align}
then we have that 
\begin{align}
\partial_tv(t)-Lv(t)=f(t) \quad \forall t\in I
\end{align}
if and only if 
\begin{align}
v(t)=\exp(t-t_0)Lv_0(t_0)+\int\limits_{t_0}^t\exp((t-s)L)f(s)\,ds \quad \forall t\in I
\end{align}
\end{defn}

We view the general equation as 
\begin{align}
v=v_{lin}+JN(f).
\end{align}
with $J$ a linear operator. 
We also have the following abstract iteration result: 
\begin{lem}\label{abstract}[\cite{tao} Prop 1.38]
Let $\mathcal{N},\mathcal{S}$ be two Banach spaces and suppose we are given a linear operator $J:\mathcal{N}\rightarrow\mathcal{S}$ with the bound
\begin{align}
||JF||_{\mathcal{S}}\leq C_0||F||_{\mathcal{N}}
\end{align}
for all $F\in\mathcal{N}$ and some $C_0>0$. Suppose that we are given a nonlinear operator $N:\mathcal{S}\rightarrow\mathcal{N}$ which is a sum of a $u$ dependent part and a $u$ independent part. Assume the $u$ dependent part $N_u$ is such that $N_u(0)=0$ and obeys the following Lipschitz bounds
\begin{align}
||N(u)-N(v)||_{\mathcal{N}}\leq \frac{1}{2C_0}||u-v||_{\mathcal{S}}
\end{align}
for all $u,v\in B_{\epsilon}=\{u\in\mathcal{S}: ||u||_{S}\leq\epsilon\}$ for some $\epsilon>0$. In other words we have that $||N||_{\dot{C}^{0,1}(B_{\epsilon}\rightarrow\mathcal{N})}\leq \frac{1}{2C_0}$. then, for all $u_{lin}\in B_{\epsilon/2}$ there exists a unique solution $u\in B_{\epsilon}$ with the map $u_{lin}\mapsto u$ Lipschitz with constant at most $2$. In particular we have that
\begin{align}
||u||_{\mathcal{S}}\leq 2||u_{lin}||_{\mathcal{S}}. 
\end{align}
\end{lem}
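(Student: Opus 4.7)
The plan is to recognize Lemma \ref{abstract} as a standard Banach contraction-mapping argument in disguise, applied to the solution operator for the equation $u = u_{lin} + JN(u)$. I would define $\Phi : B_\epsilon \to \mathcal{S}$ by $\Phi(u) := u_{lin} + JN(u)$ and prove two properties: (i) $\Phi$ is a strict contraction on $B_\epsilon$ with Lipschitz constant $\tfrac{1}{2}$, and (ii) $\Phi$ sends $B_\epsilon$ into itself whenever $u_{lin}\in B_{\epsilon/2}$. Banach's fixed point theorem applied to the complete metric space $B_\epsilon$ (closed inside the Banach space $\mathcal{S}$) then delivers the unique $u \in B_\epsilon$.

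For the contraction property I would simply combine the operator bound on $J$ with the hypothesis on $N$:
\begin{align*}
\|\Phi(u)-\Phi(v)\|_{\mathcal{S}} = \|J(N(u)-N(v))\|_{\mathcal{S}} \leq C_0\cdot \tfrac{1}{2C_0}\|u-v\|_{\mathcal{S}} = \tfrac{1}{2}\|u-v\|_{\mathcal{S}}.
\end{align*}
For self-mapping, I would exploit $N_u(0)=0$ by taking $v=0$ in the Lipschitz inequality. The $u$-independent piece of $N$ cancels under subtraction, leaving $\|N_u(u)\|_{\mathcal{N}}\leq \tfrac{1}{2C_0}\|u\|_{\mathcal{S}}$; after absorbing the constant $JN(0)$ into $u_{lin}$ (the natural reading of the decomposition into $u$-dependent and $u$-independent parts, which matches how the lemma is invoked later, where the source $f$ enters via Duhamel and is packaged into the linear datum), I obtain $\|JN_u(u)\|_{\mathcal{S}}\leq \tfrac{1}{2}\|u\|_{\mathcal{S}}\leq \tfrac{\epsilon}{2}$. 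Combined with $\|u_{lin}\|_{\mathcal{S}}\leq \tfrac{\epsilon}{2}$ this places $\Phi(u)$ in $B_\epsilon$.

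The remaining claims are then a short squeeze. For Lipschitz dependence on the data, I would subtract the fixed point identities for two solutions $u,u'$ corresponding to $u_{lin},u_{lin}'$ and use the $\tfrac{1}{2}$-contraction to absorb one term into the left side:
\begin{align*}
\|u-u'\|_{\mathcal{S}} \leq \|u_{lin}-u_{lin}'\|_{\mathcal{S}} + \tfrac{1}{2}\|u-u'\|_{\mathcal{S}},
\end{align*}
which rearranges to $\|u-u'\|_{\mathcal{S}} \leq 2\|u_{lin}-u_{lin}'\|_{\mathcal{S}}$. Specialising to $u_{lin}'=0$, whose fixed point is $u'=0$ (after the absorption described above), yields the pointwise bound $\|u\|_{\mathcal{S}}\leq 2\|u_{lin}\|_{\mathcal{S}}$. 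The main obstacle, such as it is, is purely bookkeeping: one must verify that the inhomogeneous piece of $N$ does not spoil self-mapping, which is why the hypothesis is carefully phrased in terms of $N_u$ vanishing at $0$ rather than $N$ itself; this is a minor nuisance, not a genuine difficulty, and otherwise the proof is the textbook Picard iteration scheme.
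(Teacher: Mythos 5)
Your proof is correct and is essentially the argument the paper relies on: the paper gives no proof of this lemma at all, citing it directly as Proposition 1.38 of \cite{tao}, and the proof there is exactly your contraction-mapping/Picard iteration for $\Phi(u)=u_{lin}+JN(u)$ on the closed ball $B_{\epsilon}$. Your handling of the $u$-independent part of $N$ (absorbing $JN(0)$ into $u_{lin}$ so that self-mapping and the bound $||u||_{\mathcal{S}}\leq 2||u_{lin}||_{\mathcal{S}}$ go through) is the right reading of the slightly informal hypothesis and matches how the lemma is actually invoked in the Appendix, where the source term enters through the Duhamel/linear datum.
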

 
We start by proving general energy estimates for the linear problem. We have the following classical result, for all $\beta\in \mathbb{N}$. 
\begin{thm}\label{energyestimate}
Let $c\in\mathcal{A}_0^{\beta}$, and $f(t,x)\in L^2([0,T];H^{\beta-1}(\Omega'))$, $||u_0(x)||\in H^s((\Omega')^3)$, $||u_1(x)||\in H^{s-1}((\Omega')^3)$. If $u$ is a solution to 
\begin{align}\label{wavec}
&\partial_t^2u-c^2(x)\Delta u=f(t,x) \quad \mathrm{in}\quad [0,T]\times \Omega' \\& \nonumber
\partial_tu(0,x)=u_1(x) \quad u(0,x)=u_0(x) \,\,\mathrm{in}\,\, \Omega' \nonumber \\&
u(t,x)=0 \quad \mathrm{on} \quad [0,T]\times \partial\Omega' \nonumber
\end{align}
we have the following set of estimates:
\begin{itemize}
\item There exists $C$ depending on $m_0$ and $||c^2||_{C^1(\Omega')}$ and $\tilde{A}_1$ depending on $||c^2||_{C^1(\Omega')}$ such that
\begin{align}
&||u||_{C([0,T];\dot{H}^1(\Omega'))\cap C^1([0,T];L^2(\Omega'))}\leq \\& \nonumber C\left(||u_0||_{H^1(\Omega')}+||u_1||_{L^2(\Omega')}+||f(t,x)||_{L^2(\Omega' \times [0,T])}\right)\exp(\tilde{A}_1T). 
\end{align}
and
\item There exists $C_1$ which depends on $m_0$ and $||c_i^{2}(x)||_{H^{\beta}(\Omega')}$ and $\tilde{A}_{\beta}$ which depends on $||c_i^{2}(x)||_{H^{\beta}(\Omega')}$ such that
\begin{align}\label{Wwp}
&||u||_{C([0,T]; H^{\beta}(\Omega'))}+||\partial_tu||_{C([0,T]; H^{\beta-1}(\Omega'))}\leq \\& \nonumber
C_1(1+T)\exp(\tilde{A}_{\beta}T)\times \\& \nonumber
(||u_0||_{H^{\beta}(\Omega')}+||u_1||_{H^{\beta-1}(\Omega')}+\tilde{A}_{\beta}T(||u||_{C([0,T]; H^{\beta-1}(\Omega'))}+ ||\partial_tu||_{C([0,T]; H^{\beta-2}(\Omega'))})+\\& \nonumber ||f||_{L^2([0,T]; H^{\beta-1}(\Omega'))})
\end{align}
\end{itemize}
\end{thm}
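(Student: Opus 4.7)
The plan is to prove the first bullet by the classical energy identity with multiplier $\partial_t u$, and to derive the second bullet by commuting spatial derivatives into the equation, invoking Moser estimates, and closing by elliptic regularity and Gr\"onwall. The spatial rather than temporal commutation is forced by the hypotheses: only $\|f\|_{L^2([0,T];H^{\beta-1}(\Omega'))}$ is available on the right-hand side, so one cannot cleanly differentiate the equation in $t$.

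For the first bullet, I multiply \eqref{wavec} by $\partial_t u$ and integrate over $\Omega'$. Because of the Dirichlet condition on $\partial\Omega'$, integrating the Laplacian by parts produces no boundary term, and the natural energy
\begin{align*}
E(t)=\tfrac{1}{2}\int_{\Omega'}\bigl((\partial_t u)^2+c^2(x)|\nabla u|^2\bigr)\,dx
\end{align*}
satisfies
\begin{align*}
\tfrac{d}{dt}E(t)=-\int_{\Omega'}(\nabla c^2\cdot\nabla u)\,\partial_t u\,dx+\int_{\Omega'}f\,\partial_t u\,dx.
\end{align*}
The first integral appears precisely because $c^2$ depends on $x$; Cauchy--Schwarz combined with $c^2\geq m_0$ bounds it by a constant depending on $\|c^2\|_{C^1(\Omega')}$ and $m_0^{-1}$ times $E(t)$, while the source is absorbed through $\|f(t)\|_{L^2}E(t)^{1/2}$. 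Gr\"onwall's inequality then yields the first estimate with $\tilde A_1$ determined by $\|c^2\|_{C^1(\Omega')}$ and $m_0$.

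For the second bullet I induct on $\beta$. For multi-indices $\gamma$ with $|\gamma|\leq\beta-1$, the commutator identity
\begin{align*}
\partial_t^2\partial_x^\gamma u-c^2\Delta\partial_x^\gamma u=\partial_x^\gamma f-[\partial_x^\gamma,c^2]\Delta u
\end{align*}
reduces the higher-regularity bound to the basic energy estimate applied to $\partial_x^\gamma u$ with this commutator as source. By the Moser--Kato--Ponce product estimate, and using that $H^{\beta-1}(\Omega')$ is a multiplicative algebra once $\beta-1>3/2$, the commutator is controlled in $L^2$ by $\|c^2\|_{H^\beta(\Omega')}\|u\|_{H^\beta(\Omega')}$ plus lower-order products. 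The top piece is self-referential and Gr\"onwall absorbs it into $\exp(\tilde A_\beta T)$ with $\tilde A_\beta$ proportional to $\|c^2\|_{H^\beta}/m_0$; the genuinely lower-order pieces are precisely the remainder $\tilde A_\beta T(\|u\|_{C([0,T];H^{\beta-1})}+\|\partial_t u\|_{C([0,T];H^{\beta-2})})$ that appears explicitly in \eqref{Wwp}, and they are closed by the previous induction step. To pass from the Duhamel-based $\dot H^1\times L^2$ information on $\partial_x^\gamma u$ to the isotropic bound on $\|u(t)\|_{H^\beta}$, I freeze $t$ and use the equation itself as the elliptic identity $\Delta u=c^{-2}(\partial_t^2 u-f)$ together with Dirichlet elliptic regularity on $\Omega'$, which is legitimate because $\Omega'$ is chosen smooth and $c_i\equiv 1$ outside a compact set.

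The principal obstacle is handling the top-order commutator term in the presence of Dirichlet conditions: raw spatial derivatives $\partial_x^\gamma u$ do not vanish on $\partial\Omega'$, so a naive energy identity for them is not directly legal. The clean way around this is to differentiate only tangentially to $\partial\Omega'$ for the energy step and to recover the missing normal derivatives by Dirichlet elliptic regularity applied to $c^2\Delta u=\partial_t^2 u-f$; equivalently, one can exploit that $c_i\equiv 1$ off a ball so that the wave problem is effectively posed on $\mathbb{R}^3$ where no boundary obstruction exists. Tracking the Moser constants through this and through the induction on $\beta$ is what produces the precise form $C_1(1+T)\exp(\tilde A_\beta T)$ displayed on the right-hand side of \eqref{Wwp}.
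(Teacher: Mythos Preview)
Your proposal is correct and follows essentially the same route as the paper: the multiplier $\partial_t u$ energy identity and Gr\"onwall for the first bullet, and spatial commutation $\partial_x^\gamma$ of the equation for the second. The paper's argument is in fact considerably terser than yours---it simply writes ``differentiating the equation (e.g.\ applying the operator $\nabla^k$ successively) gives control over $\|u\|_{C([0,T];\dot H^k)\cap C^1([0,T];\dot H^{k-1})}$'' without spelling out the commutator analysis or the boundary issue you flag; your tangential-derivative-plus-elliptic-regularity workaround is a legitimate way to make that step rigorous. One minor point: in the paper the factor $(1+T)$ arises not from Moser bookkeeping but from the separate elementary step $\|u\|_{C([0,T];L^2)}\le\|u_0\|_{L^2}+\int_0^T\|\partial_t u\|_{L^2}\,dt$, used to upgrade the homogeneous $\dot H^k$ control to full $H^k$.
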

\begin{proof}
The proofs below are loosely based on Theorem 4.6 and Corollary 4.9 in \cite{Luk} which have been adapted for our setting. By definition we have
\begin{align}\label{defwave}
\int\limits_0^t\int\limits_{\Omega'}(\partial_s^2u-c^2\Delta u)\partial_su\,dx\,ds=\int\limits_0^t\int\limits_{\Omega'}f(s,x)\partial_su\,dx\,ds
\end{align}
and
\begin{align}
\nabla\cdot(c^2\nabla u)=c^2\Delta u+\nabla c^2\cdot\nabla u.
\end{align}

We also have by the divergence theorem 
\begin{align}
&\int\limits_0^t\int\limits_{\Omega'}\partial_su(\nabla\cdot(c^2\nabla u))\,dx\,ds=\\& \nonumber
-\int\limits_0^t\int\limits_{\Omega'}\partial_s(\nabla u)\cdot (c^2\nabla u)\,dx\,ds+\int\limits_0^t\int\limits_{\partial \Omega'}\partial_s u\frac{\partial(c^2u)}{\partial \nu}\,dS\,ds.
\end{align}

We set 
\begin{align}
||u||_E^2(t)=\frac{1}{2}\left(\int\limits_0^t\int\limits_{\Omega'}|\nabla u(s,x)|^2+|\partial_su(s,x)|^2\,dx\,ds\right) 
\end{align}
and 
\begin{align}
||u||_{Ec}^2(t)=\frac{1}{2}\left(\int\limits_0^t\int\limits_{\Omega'}c^2|\nabla u(s,x)|^2+|\partial_su(s,x)|^2\,dx\,ds\right) 
\end{align}

The end result of plugging the equalities into \eqref{defwave} is that 
\begin{align}\label{inept}
\frac{d}{ds}||u||_{Ec}^2(T)=\int\limits_0^T\int\limits_{\Omega'}f\partial_su\,dx\,ds+\int\limits_0^T\int\limits_{\partial \Omega'}\partial_s u\frac{\partial(c^2u)}{\partial \nu}\,dS\,ds-\int\limits_0^T\int\limits_{\Omega'}\nabla c^2\cdot\nabla u\partial_su\,dx\,ds
\end{align} 
We let $C=\min\{m_0,1\}$. Taking the absolute values of both sides and remarking that $2ab\leq a^2+b^2$ for all real valued functions $a,b$ we obtain 
\begin{align}
C\frac{d}{dt}||u||^2_E(T)\leq \tilde{A}||f||^2_{L^2(\Omega'\times[0,T])}+\tilde{A}||u||^2_E(T)
\end{align}
 Applying Grownwall's inequality gives the desired result. 
For the second estimate, differentiating the equation \eqref{defwave} (e.g. applying the operator $\nabla^k$ successively) gives control over
\begin{align}
||u||_{C([0,T];\dot{H}^k(\Omega'))\cap C^1([0,T];\dot{H}^{k-1}(\Omega'))} 
\end{align}
it remains to control $||u||_{C([0,T];L^2(\Omega'))}$ but it is easy to see as 
\begin{align}
||u||_{C([0,T];L^2(\Omega'))}\leq ||u_0||_{L^2(M)}+\int\limits_0^T||\partial_tu||_{L^2(\Omega')}^2(t)\,dt
\end{align}
which gives the desired result. 
\end{proof}

\begin{proof}[Proof of Theorem \ref{wpN}]
Recall that $H^{\alpha}(M)\subseteq L^{\infty}(M)$ if $\alpha>d/2$, which is an assumption we will use here. 
If we reformulate the wave equation \eqref{wavec} as
\begin{align}
\left(\begin{array}{c} u \\ v \end{array}\right)_t  = 
\left(\begin{array}{cc} 0 & 1\\ c^2\Delta & 0 \end{array}\right)
\left(\begin{array}{c} u \\ v \end{array}\right)+\left(\begin{array}{c} 0 \\ f \end{array}\right)  
\end{align}
with 
\begin{align}\label{variablesystem1}
&\mathcal{U}=\left(\begin{array}{c} u \\ v \end{array}\right)\quad
A=\left(\begin{array}{cc} 0 & 1\\ c^2\Delta & 0 \end{array}\right) \quad F=\left(\begin{array}{c} 0 \\ f \end{array}\right)  \quad \Phi=\left(\begin{array}{c} g \\ h \end{array}\right)
\end{align}
One can write the inhomogenous scalar valued wave equation as
\begin{align}
&\mathcal{U}_t=A\mathcal{U}+F\\& \nonumber
\mathcal{U}(0)=\Phi
\end{align}

Using this as our model, can re-write the more complicated system \eqref{system} 
\begin{align}
&\mathcal{W}_t=\tilde{A}\mathcal{W}+\tilde{F}\\& \nonumber
\mathcal{W}(0)=0
\end{align}
with 
\begin{align}
\mathcal{W}=(u_1,v_1,u_2,v_2,u_3,v_3)^t \quad
\tilde{F}=(0,|u|^2,0,|u|^2,0,|u|^2)^t+(0,\epsilon f_{1i},0,\epsilon f_{1i},0,\epsilon f_{1i})
\end{align}
and
\begin{align}
A_i=\left(\begin{array}{cc} 0 & 1\\ c_i^2\Delta & 0 \end{array}\right)
\end{align}
elements of the block diagonal matrix
\begin{align}
\tilde{A}=\left(\begin{array}{ccc} A_1 & 0& 0 \\ 0 & A_2 & 0\\ 0 & 0 & A_3
 \end{array}\right)
\end{align}
We then apply the abstract Duhamel iteration argument with $\mathcal{S}=C([0,T];H^{s}(\Omega')^6)\cap C^1([0,T];H^{s-1}(\Omega')^6)$ and $\mathcal{N}=L^2([0,T];H^{s-1}(\Omega')^6)$. We leave the $s$ as an arbitrary integer, so if we set $J$ the Duhamel propagator associated to $\Box_S$ with $0$ initial conditions, then the inequality $||JF||_{\mathcal{S}}\leq C_0||F||_{\mathcal{N}}$ is easily satisfied with $C_0=D_s(T)$ given to us by Theorem \ref{energyestimate} (The constant $D_s(T)$ is the maximum over the conformal factors). In practice for the rest of the article we only need $s=3$. 

The key observation is that 
\begin{align}\label{diff}
||\tilde{F}(W_1)-\tilde{F}(W_2)||_{\mathcal{N}}\leq B||W_1-W_2||_{\mathcal{S}}.
\end{align}
for some positive constant $B$, depending on $||c_i^2(x)||_{H^s(\Omega)}$ $i=1,2,3$, $m_0$ and $m_1$, $\epsilon$ and $T$ with $W_1=(w_{1,1},v_{1,1},w_{1,2},v_{1,2}, w_{1,3},v_{1,3})^t$ and $W_2=(w_{2,1},v_{3,1},w_{2,2},v_{2,2}, w_{2,3},v_{2,3})^t$.

By linearity, we have  
\begin{align*}
& ||\tilde{F}(W_1)-\tilde{F}(W_2)||_{\mathcal{N}}\leq \\& \nonumber 3||w_{1,1}^2-w_{2,1}^2+w_{1,2}^2-w_{2,2}^2+w_{1,3}^2-w_{2,3}^2||_{L^2([0,T]; H^{s-1}(\Omega'))}. 
\end{align*}
We set
\begin{align}
M_1=\sup_{i=1,2; 1\leq j\leq 3}||w_{i,j}||_{C([0,T]; H^{s-1}(\Omega'))}\leq \epsilon 
\end{align}
where we used the upper bound implied by the hypothesis $\mathcal{W}_1,\mathcal{W}_2\in B_{\epsilon}$. We then obtain
\begin{align}
&||\tilde{F}(W_1)-\tilde{F}(W_2)||_{\mathcal{N}}\leq 3\epsilon T\sum\limits_{i=1}^3||w_{1,i}-w_{2,i}||_{C([0,T];H^{s-1}(\Omega'))}\leq \\& \nonumber \frac{3}{2}\epsilon T||W_1-W_2||_{\mathcal{S}}
\end{align}
and the result \eqref{diff} follows with $B=\frac{3}{2}\epsilon T$. 

The corresponding Duhamel iterates are
\begin{align}
\mathcal{W}^0=\mathcal{W}_{lin} \quad
\mathcal{W}^n=\mathcal{W}^{n-1}_{lin}+JN(\mathcal{W}^{n-1})
\end{align}
and from Lemma \ref{abstract} we can conclude
\begin{align}
\lim\limits_{n\rightarrow\infty}\mathcal{W}^n=\mathcal{W}^*
\end{align}
 is the unique solution $W^*\in B_{\epsilon}$ whenever $T$ is sufficiently small, by Lemma \ref{abstract}. In particular, for the Theorem to hold we must have
\begin{align}
\frac{3T\epsilon}{2}<\frac{1}{2D_s(T)}.
\end{align} 
\begin{align}\label{cond1}
TD_s(T)<(3\epsilon)^{-1}
\end{align}
As $D_s(T)$ is a polynomial in $T$ and $\exp(\tilde{A}T)$ and since $\log(R)\leq R$ for all $R\in\mathbb{R}^+$, 
\begin{align}\label{cond2}
C(s)T<\log((3\epsilon)^{-1})-C(s')
\end{align} 
for some $C(s),C(s')$ depending on $s$ and $\tilde{A}_s$. 
\end{proof}

\begin{lem}\label{life}
Let $T(\epsilon)$ denote the maximal timespan for well-posedness of the system \eqref{m2N}. There exists $\epsilon_1\in (0,1)$ such that for all $\epsilon\in(0,\epsilon_1)$, the inequality
\begin{align}\label{lifespan}
\mathrm{diam}(\Omega)<T(\epsilon_1)<T(\epsilon)
\end{align} 
holds. 
\end{lem}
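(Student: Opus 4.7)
The plan is to read off the lifespan bound directly from the well-posedness proof of Theorem \ref{wpN} and then use its monotone blow-up as $\epsilon\to 0$ to beat $\mathrm{diam}(\Omega)$.

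First I would recall from \eqref{cond1}--\eqref{cond2} that the Duhamel iteration closes whenever $C(s)T < \log((3\epsilon)^{-1}) - C'(s)$, with $C(s),\,C'(s)$ depending only on $s$, $m_0$, $m_1$, and the $C^s(\Omega')$ norms of the $c_i$'s, all of which are fixed data in this problem. Consequently the maximal lifespan $T(\epsilon)$ for which \eqref{m2N} is well posed satisfies the lower bound
\begin{align*}
T(\epsilon) \,\geq\, \frac{1}{C(s)}\bigl(\log((3\epsilon)^{-1})-C'(s)\bigr),
\end{align*}
and in particular $T(\epsilon)\to\infty$ as $\epsilon\to 0^+$.

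Next I would verify that $\epsilon\mapsto T(\epsilon)$ is monotone non-increasing on $(0,1)$. This is essentially automatic: if $0<\epsilon<\epsilon_1<1$, then the datum $\epsilon F_1$ with $\|F_1\|_*\le 1$ can equivalently be written as $\epsilon_1(\tfrac{\epsilon}{\epsilon_1}F_1)$, which also has $\|\cdot\|_*\le 1$, so every contraction argument that closes on $[0,T]$ for data scale $\epsilon_1$ closes a fortiori for the strictly smaller data scale $\epsilon$. Equivalently the threshold inequality $C(s)T<\log((3\epsilon)^{-1})-C'(s)$ is monotone in $\epsilon$, so $T(\epsilon_1)\le T(\epsilon)$.

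Finally I would choose $\epsilon_1\in(0,1)$ small enough that
\begin{align*}
\frac{1}{C(s)}\bigl(\log((3\epsilon_1)^{-1})-C'(s)\bigr) \,>\, \mathrm{diam}(\Omega),
\end{align*}
which is possible because the left-hand side tends to $+\infty$ as $\epsilon_1\to 0^+$. Combining this with the lower bound for $T(\epsilon_1)$ and the monotonicity step yields the chain $\mathrm{diam}(\Omega)<T(\epsilon_1)\le T(\epsilon)$ for every $\epsilon\in(0,\epsilon_1)$. The only mildly delicate point is ensuring that $\mathrm{diam}(\Omega)$ here is interpreted consistently with the metric appearing in the geometric hypothesis of Corollary \ref{mainC}, namely with respect to the maximum of the sound speeds; since the $c_i$ are uniformly bounded above by $\sqrt{m_1}$ on the compact set $\overline{\Omega}$, this diameter is finite, so the choice of $\epsilon_1$ above is well defined. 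No other obstacle arises, as all the heavy lifting is already contained in Theorem \ref{wpN}.
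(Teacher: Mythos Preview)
Your proof is correct and follows the same route as the paper: invoke the lifespan condition \eqref{cond2} from Theorem \ref{wpN} and exploit that $\log((3\epsilon)^{-1})\to\infty$ together with finiteness of $\mathrm{diam}(\Omega)$ to find $\epsilon_1$. In fact your version is more complete than the paper's, which does not spell out the monotonicity step $T(\epsilon_1)\le T(\epsilon)$ that you justify via the rescaling $\epsilon F_1=\epsilon_1(\tfrac{\epsilon}{\epsilon_1}F_1)$.
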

\begin{proof}
For each $\epsilon$, we know the timescale $T(\epsilon)$ must be such that \eqref{cond2} holds with $s=3$. Then the condition \eqref{lifespan} is satisfied if \eqref{cond2} holds with $T$ replaced by $\mathrm{diam}(\Omega)$. 
This is clearly possible as $\mathrm{diam}(\Omega)$ is finite, whence the conclusion is possible. 
\end{proof}

\begin{lem}\label{dirichlet}
The operator $\Lambda$ is bounded on $u\in L^2([0,T];H^1(\Omega))\cap C([0,T];C(\overline{\Omega}))$, with 
\begin{align}
||\Lambda u||_{L^2([0,T];L^2(\partial\Omega))}\leq C||u||_{L^2([0,T];H^1(\Omega))}
\end{align}  
where $C$ is a constant depending only on the geometry of $\Omega$. 
\end{lem}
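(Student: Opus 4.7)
The plan is to reduce the space-time trace bound to the classical spatial trace theorem on the Lipschitz domain $\Omega$, applied slice-by-slice in time, then integrate.

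First, I would invoke the standard trace theorem for Lipschitz domains: there exists a bounded linear operator $\gamma:H^1(\Omega)\to H^{1/2}(\partial\Omega)$ extending pointwise restriction on $C(\overline{\Omega})\cap H^1(\Omega)$, with
\begin{align*}
\|\gamma v\|_{L^2(\partial\Omega)}\leq \|\gamma v\|_{H^{1/2}(\partial\Omega)}\leq C_\Omega\|v\|_{H^1(\Omega)},
\end{align*}
where $C_\Omega$ depends only on the Lipschitz character of $\partial\Omega$. This result is classical for bounded Lipschitz domains (see, e.g., Ne\v{c}as) and requires no additional smoothness of $\partial\Omega$ beyond what the paper already assumes.

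Next, for $u\in L^2([0,T];H^1(\Omega))\cap C([0,T];C(\overline{\Omega}))$, apply $\gamma$ at almost every time $t\in[0,T]$. The continuity assumption $u\in C([0,T];C(\overline{\Omega}))$ guarantees that $\gamma u(t,\cdot)$ coincides with the pointwise restriction $u(t,\cdot)|_{\partial\Omega}$, so $\Lambda u$ (in the notation of the lemma, the boundary trace in space-time) is well defined as a measurable function on $[0,T]\times\partial\Omega$. Squaring the spatial trace inequality and integrating in $t$ gives
\begin{align*}
\|\Lambda u\|^2_{L^2([0,T];L^2(\partial\Omega))}=\int_0^T\|\gamma u(t,\cdot)\|_{L^2(\partial\Omega)}^2\,dt\leq C_\Omega^2\int_0^T\|u(t,\cdot)\|_{H^1(\Omega)}^2\,dt,
\end{align*}
which is exactly the claimed inequality with $C=C_\Omega$.

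The only real subtlety is the measurability of $t\mapsto\|\gamma u(t,\cdot)\|_{L^2(\partial\Omega)}$; this follows because $\gamma$ is a bounded linear map between Banach spaces, so $\gamma\circ u\in L^2([0,T];L^2(\partial\Omega))$ whenever $u\in L^2([0,T];H^1(\Omega))$. I do not anticipate any substantive obstacle: the lemma is essentially a book-keeping statement that the spatial trace extends to a bounded operator on the Bochner space $L^2([0,T];H^1(\Omega))$, and the constant inherits its dependence on $\Omega$ (not on $T$ or the coefficients $c_i$) from the classical trace inequality.
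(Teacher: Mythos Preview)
Your proposal is correct and follows essentially the same approach as the paper: the paper simply recalls the classical trace theorem on a bounded Lipschitz domain and asserts that the lemma ``now follows immediately,'' which is precisely your slice-by-slice application of the spatial trace inequality followed by integration in $t$. If anything, your write-up is more careful than the paper's one-line deduction, since you address the measurability issue and the role of the continuity hypothesis in identifying $\gamma u(t,\cdot)$ with the pointwise restriction.
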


We recall the trace theorem 
\begin{thm}
Assume that $\Omega$ is a bounded with Lipschitz boundary, then $\exists$ a bounded linear operator
\begin{align}
Tv=v|_{\partial\Omega} \quad \textrm{for} \quad v\in W^{1,p}(\Omega)\cap C(\overline{\Omega})
\end{align}
and a constant $c(p,\Omega)$ depending only on $p$ and the geometry of $\Omega$ such that
\begin{align}
||Tv||_{L^p(\partial\Omega)}\leq c(p,\Omega)||v||_{W^{1,p}(\Omega)}
\end{align} 
\end{thm}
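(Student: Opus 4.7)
The plan is the standard localize-and-flatten argument. I would cover $\overline{\Omega}$ by finitely many open sets: one relatively compact interior set $U_0 \Subset \Omega$ and boundary patches $U_1,\ldots,U_N$ on each of which, after a rotation of coordinates, $\partial\Omega$ is the graph $x_n=\gamma_i(x')$ of a Lipschitz function $\gamma_i$. The Lipschitz hypothesis on $\partial\Omega$ produces such a cover with finitely many charts and uniformly bounded Lipschitz constants $L_i$. Fix a subordinate partition of unity $\{\chi_i\}_{i=0}^N$, so that $v=\sum_i \chi_i v$ on $\overline{\Omega}$; only the boundary patches $i\ge 1$ contribute to $v|_{\partial\Omega}$.

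For each $i\ge 1$, I flatten via the bi-Lipschitz map $\Phi_i(x',x_n)=(x',\,x_n-\gamma_i(x'))$, which has unit Jacobian determinant and sends $\partial\Omega\cap U_i$ into the hyperplane $\{y_n=0\}$ and $U_i\cap\Omega$ into the upper half-space. Set $w_i=(\chi_i v)\circ\Phi_i^{-1}$. Then $w_i\in W^{1,p}\cap C$ on its domain and, thanks to the support of $\chi_i$, vanishes near the lateral and top boundaries of the straightened patch, so for a.e.\ $x'$ the absolute continuity of Sobolev slices combined with the fundamental theorem of calculus gives
\begin{align*}
|w_i(x',0)|^p = -\int_0^\infty \partial_{y_n}|w_i(x',y_n)|^p\,dy_n.
\end{align*}
Using $|\partial_{y_n}|w_i|^p|\le p|w_i|^{p-1}|\partial_{y_n}w_i|$, integrating in $x'$, and applying Hölder followed by Young's inequality (with conjugate exponents $p$ and $p/(p-1)$), I obtain the flat trace estimate
\begin{align*}
\|w_i(\cdot,0)\|_{L^p(\mathbb{R}^{n-1})}^p \le C(p)\bigl(\|w_i\|_{L^p}^p+\|\nabla w_i\|_{L^p}^p\bigr).
\end{align*}
The case $p=1$ is even more direct, since the FTC bound already gives $\|w_i(\cdot,0)\|_{L^1}\le \|\partial_{y_n}w_i\|_{L^1}$.

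To transfer back, I use that each $\Phi_i$ is bi-Lipschitz with constants controlled by $L_i$, and that surface measure on the graph $\{x_n=\gamma_i(x')\}$ equals $\sqrt{1+|\nabla\gamma_i|^2}\,dx'$ (by Rademacher applied to $\gamma_i$), with density bounded in terms of $L_i$. The chain rule for compositions of Sobolev functions with bi-Lipschitz maps yields $\|\nabla w_i\|_{L^p}\le C(L_i)\|\nabla(\chi_i v)\|_{L^p}$, and the Leibniz rule gives $\|\nabla(\chi_i v)\|_{L^p}\le C(\|\chi_i\|_{C^{0,1}})\|v\|_{W^{1,p}(\Omega)}$. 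Pulling back the flat trace estimate through $\Phi_i^{-1}$ and summing over $i=1,\ldots,N$ produces
\begin{align*}
\|Tv\|_{L^p(\partial\Omega)}\le c(p,\Omega)\,\|v\|_{W^{1,p}(\Omega)},
\end{align*}
with a constant depending only on $p$, the choice of cover, the Lipschitz constants $L_i$, and the Lipschitz norms of the $\chi_i$ — all determined by the geometry of $\Omega$. Linearity of $T$ is immediate from the pointwise definition, and boundedness of the resulting operator is exactly the inequality just established.

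The main obstacle is not the fundamental-theorem-of-calculus computation, which is routine, but the bookkeeping forced by the Lipschitz (as opposed to $C^1$) regularity of $\partial\Omega$. One must justify the chain rule for $|w|^p$ with Sobolev $w$ via mollification, invoke the area formula for surface measure on a Lipschitz graph, and cite the change-of-variable and chain rules for Sobolev functions under bi-Lipschitz maps (all consequences of Rademacher's theorem). Each step is classical, but they should be cited explicitly so that the geometric constant $c(p,\Omega)$ is visibly a function of only the Lipschitz data of $\partial\Omega$.
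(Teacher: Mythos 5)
Your argument is correct, but note that the paper does not prove this statement at all: it is explicitly ``recalled'' as the classical trace theorem for Lipschitz domains and then used as a black box to deduce Lemma \ref{dirichlet}. What you have written is the standard localize-and-flatten proof that the paper implicitly points to, and it is sound: the cover by Lipschitz graph patches, the unit-Jacobian shear $\Phi_i(x',x_n)=(x',x_n-\gamma_i(x'))$, the fundamental-theorem-of-calculus estimate on almost every vertical line via the ACL characterization of $W^{1,p}$, and the H\"older--Young step all go through, and the hypothesis $v\in C(\overline{\Omega})$ lets you identify the pointwise restriction with the line limits without any trace-representative ambiguity. The only points requiring care are exactly the ones you flag yourself --- justifying $\partial_{y_n}|w_i|^p = p|w_i|^{p-2}w_i\,\partial_{y_n}w_i$ for Sobolev $w_i$ (which works since $|w_i|^{p-1}|\nabla w_i|\in L^1$ by H\"older), and the area formula $d\sigma=\sqrt{1+|\nabla\gamma_i|^2}\,dx'$ for a Lipschitz graph via Rademacher --- so your proof fills in precisely what the paper leaves to the literature.
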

The proof of Lemma \ref{dirichlet} now follows immediately.

\section*{Acknowledgments}
A.~W.~acknowledges support by EPSRC grant EP/L01937X/1.

\end{document}